\tikzstyle{place}=[draw,circle,minimum size=1.2mm,inner sep=1pt,outer sep=-1.1pt,fill=black]
\tikzset{>=latex,shorten >= 0.01cm,shorten <= 0.01cm}
\DeclareMathOperator{\sgn}{sgn}
\newcommand{\A}{\mathcal{A}}
\newcommand{\R}{\mathbb{R}}
\newcommand{\cA}{\mathcal{A}}\newcommand{\cT}{\mathcal{T}}
\newcommand{\cB}{\mathcal{B}}
\newcommand{\K}{\mathcal{K}}
\newcommand{\W}{\mathcal{W}}
\newcommand{\V}{\mathcal{V}}\newcommand{\U}{\mathcal{U}}
\newcommand{\T}{\mathcal{T}}
\newcommand{\B}{\mathcal{B}}
\newcommand{\G}{\mathcal{G}}
\newcommand{\ri}[1]{\mathop{\rm ri}(#1)} 
\newcommand{\kk}{\ell}
\newcommand{\kl}{\ell}
\newcommand{\elb}{k}
\newtheorem{theorem}{Theorem}[section] 
\newtheorem{corollary}[theorem]{Corollary}
\newtheorem{example}[theorem]{Example}
\newtheorem{remark}[theorem]{Remark}
\begin{document}

\title{Bordering for spectrally arbitrary sign patterns\footnote{\copyright\ 2017. This manuscript version is made available under the CC-BY-NC-ND 4.0 
		license http://creativecommons.org/licenses/by-nc-nd/4.0/ }}

\date{August 3, 2017} 

\author[O]{D.D. Olesky} 
\ead{dolesky@cs.uvic.ca}
\author[vdD]{P. van den Driessche}
\ead{pvdd@math.uvic.ca}
\author[VM]{K.~N. Vander~Meulen\corref{cor1}} 
\ead{kvanderm@redeemer.ca}

\cortext[cor1]{Corresponding author}

\address[O]{Department of Computer Science, University of Victoria, BC, Canada, V8W 2Y2}
\address[vdD]{Department of Mathematics and Statistics, University of Victoria, BC, Canada, V8W 2Y2} 
\address[VM]{Department of Mathematics, Redeemer University College, ON, Canada, L9K 1J4}

\begin{abstract}
We develop a matrix bordering technique that can be applied to an irreducible spectrally arbitrary 
sign pattern to construct a higher order spectrally arbitrary sign pattern. This technique
generalizes a recently developed triangle extension method. We describe recursive constructions
of spectrally arbitrary patterns using our bordering technique, and show
that a slight variation of this technique  can be used to construct inertially
arbitrary sign patterns. 
\end{abstract}

\begin{keyword}{nilpotent matrix, spectrally arbitrary pattern, nilpotent-Jacobian method, inertially arbitrary pattern.}
\\

\emph{2010 Mathematics Subject Classification}. 15A18, 15B35.
\end{keyword}
\maketitle 


\tikzstyle{place}=[circle,draw=black!100,fill=black!100,thick,inner sep=0pt,minimum size=1mm]
\tikzstyle{left}=[>=latex,<-,semithick]
\tikzstyle{right}=[>=latex,->,semithick]
\tikzstyle{nleft}=[>=latex,-,semithick]
\tikzstyle{nright}=[>=latex,-,semithick]
\tikzstyle{right2}=[-,semithick]

\section{Introduction}
A number of methods have been developed to check that a specific pattern
is spectrally or inertially arbitrary, such as the analytic nilpotent-Jacobian method and 
the algebraic nilpotent-centralizer method (see e.g. \cite{CGKOVV, DJOD, GS, GS2}), 
and these have been applied to various classes of 
patterns (see e.g. \cite{Britz, CV, P}). Recently in~\cite{KSVW}, a digraph method called {\emph{triangle extension}}
has been developed for constructing higher order spectrally or inertially arbitrary patterns
from lower order patterns. In this paper, we generalize the triangle extension method
by formulating it as a matrix bordering technique (see Remark~\ref{te}).   
With this bordering technique, we construct higher order patterns (some of which cannot
be obtained by triangle extension) that are spectrally or inertially arbitrary from lower order patterns.
We give examples of new spectrally and inertially arbitrary sign patterns obtained by bordering.

\subsection{Definitions and the nilpotent Jacobian method.}
Given an order $n$  matrix $A=[a_{ij}]$, denote the characteristic polynomial of 
$A$ by $p_A(z)=\det(zI-A)$.  
A \emph{sign pattern} is a matrix $\A=[\alpha_{ij}]$ of order $n$ with entries in $\{0,+,-\}$.
Let $$Q(\A)=\{ A\ | \  a_{ij}=0 {\rm{\ if\ }} \alpha_{ij}=0,  a_{ij}>0 
{\rm{\ if\ }} \alpha_{ij}=+  {\rm{\ and \ }} a_{ij}<0 
{\rm{\ if\ }} \alpha_{ij}=-\}.$$
If $A\in Q(\A)$ for some pattern $\A$, then  $A$ is a  \emph{realization} of $\A$
and we sometimes refer to $\A$ as $\sgn(A)$.
A pattern $\A$ is \emph{spectrally arbitrary} if for every degree $n$ monic polynomial
$p(z)$ over $\R$, there is some real matrix $A$ such that $A\in Q(\A)$ and $p_A(z)=p(z)$.
A pattern $\B=[\beta_{ij}]$ is a \emph{superpattern} 
of $\A$ if $\alpha_{ij}\neq 0$ implies $\beta_{ij}=\alpha_{ij}$, and $\A$ is a \emph{subpattern} of $\B$.
Two patterns $\A$ and $\B$ are {\emph{equivalent}} if $\B$ can be obtained
from $\A$ via any combination of negation, transposition, permutation similarity and signature similarity. 

A matrix $A$ is \emph{nilpotent} if $A^k=0$ for some positive integer $k$ and  
the smallest positive integer $k$ such that $A^k=0$ is the \emph{index} of $A$.
An order $n$ nilpotent matrix $A$ has characteristic polynomial $p_A(z)=z^n$.

Suppose $\A$ is an order $n$ sign pattern with a nilpotent matrix $A\in Q(\A)$ with
$m\geq n$ nonzero entries $a_{i_1j_1}, a_{i_2j_2},\ldots, a_{i_mj_m}$. Let 
$X=X_A(x_1,x_2,\ldots,x_m)$ denote the matrix obtained from $A$ by replacing
$a_{i_kj_k}$ with the variable $x_k$ for $k=1,\ldots, m$. Writing
$p_{X}(z)=z^n+f_1z^{n-1}+\cdots+f_{n-1}z+f_n$
for some $f_i=f_i(x_1,x_2,\ldots,x_m)$, let $J=J_{X}$ be the 
$n \times m$ Jacobian matrix with $(i,j)$ entry equal to $\frac{\partial f_i}{\partial x_j}$ for
$1\leq i \leq n$, and $1\leq j \leq m$. Let $J_{X=A}$ denote the Jacobian
matrix evaluated at the nilpotent realization, that is 
$J_{X=A}=J\vert_{(x_1,x_2,\ldots,x_m)=(a_{i_1j_1}, a_{i_2j_2},\ldots, a_{i_mj_m})}$. 
A nilpotent matrix $A$ \emph{allows
a full-rank Jacobian} if the rank of $J_{X=A}$ is $n$.
Finding a nilpotent matrix $A\in Q(\A)$ that allows a full-rank 
Jacobian is known as the \emph{nilpotent-Jacobian method}. As noted in part (c) of Theorem~\ref{SAP},
 this method guarantees
that every superpattern of $\A$ is spectrally arbitrary.

A matrix $A$ (or pattern $\A$) is \emph{reducible} 
 if there is a permutation matrix $P$ such that $PAP^T$ (resp. $P\A P^T$)
 is block triangular with more than one nonempty diagonal block. Otherwise it is \emph{irreducible}. 
A matrix $A$ is  
\emph{nonderogatory} if the dimension of the eigenspace of every 
eigenvalue is equal to one.
The following theorem combines known results from 
\cite{CGKOVV}
and \cite{DJOD}.

\begin{theorem}\label{SAP}
Let $\A$ be a sign pattern of order $n$. If a nilpotent matrix $A\in Q(\cA)$ allows a full-rank Jacobian, then 
\begin{enumerate}
\item[(a)] $A$ is irreducible,
\item[(b)] $A$ is nonderogatory, and
\item[(c)] every superpattern of $\cA$ is spectrally arbitrary. 
\end{enumerate}
\end{theorem}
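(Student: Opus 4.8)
The plan is to establish the three parts separately. Part (c) carries the analytic content, and parts (a) and (b) then follow quickly from the way the coefficients $f_k$ and their partial derivatives at a nilpotent matrix are constrained. I would begin with (c). After relabeling the $m$ variables we may assume the $n\times n$ submatrix $\tilde J$ formed by the first $n$ columns of $J_{X=A}$ is invertible. Freeze $x_{n+1},\dots,x_m$ at $a_{i_{n+1}j_{n+1}},\dots,a_{i_mj_m}$ and regard $g\colon(x_1,\dots,x_n)\mapsto(f_1,\dots,f_n)$ as a polynomial, hence smooth, map defined near $y^\ast=(a_{i_1j_1},\dots,a_{i_nj_n})$. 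Since $A$ is nilpotent, $g(y^\ast)=0$ and $Dg(y^\ast)=\tilde J$ is invertible, so by the inverse function theorem $g$ carries a neighbourhood of $y^\ast$ diffeomorphically onto an open neighbourhood of $0$, which contains a ball $B(0,r)$; shrinking the neighbourhood we may assume every $x_k$ there has the sign of $a_{i_kj_k}$, so every coefficient vector in $B(0,r)$ equals that of $p_M$ for some $M\in Q(\cA)$. To reach an arbitrary monic polynomial $z^n+c_1z^{n-1}+\cdots+c_n$, use the scaling $M\mapsto tM$ with $t>0$, under which the coefficient of $z^{n-i}$ is multiplied by $t^i$: choose $t$ large enough that $(c_1/t,\dots,c_n/t^n)\in B(0,r)$, realize this vector by some $M\in Q(\cA)$, and note $tM\in Q(\cA)$ has $p_{tM}(z)=z^n+c_1z^{n-1}+\cdots+c_n$. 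For a superpattern $\cB$, first fix each entry that is nonzero in $\cB$ but zero in $\cA$ at a small value $\varepsilon$ of the prescribed sign; by continuity of determinants the analogous $n\times n$ Jacobian block stays invertible and the value of the perturbed map at $y^\ast$ lies within $O(\varepsilon)$ of $0$, so a quantitative version of the inverse function theorem again yields a ball $B(0,r)$ in the image with $r$ bounded below uniformly in the small fixed perturbation; the same scaling argument, which preserves all signs, shows $\cB$ is spectrally arbitrary.

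For (a), suppose $A$ is reducible, so after a permutation similarity $A=\left(\begin{smallmatrix}A_1&C\\0&A_2\end{smallmatrix}\right)$ with $A_1,A_2$ of orders $n_1,n_2\ge1$. Then $p_X$ is the product of the characteristic polynomials of the two diagonal blocks, so it does not involve the entries in $C$; every column of $J_{X=A}$ indexed by a variable in $C$ is therefore zero. Writing the two factors as $z^{n_r}+g^{(r)}_1z^{n_r-1}+\cdots$ for $r=1,2$ (with $g^{(r)}_0:=1$), the product rule gives $f_k=\sum_{i+j=k}g^{(1)}_ig^{(2)}_j$; since $A_1,A_2$ are nilpotent all $g^{(r)}_i$ vanish at $X=A$, so at that point $\partial f_k/\partial x_\ell$ equals $\partial g^{(1)}_k/\partial x_\ell$ when $x_\ell$ lies in $A_1$ and $\partial g^{(2)}_k/\partial x_\ell$ when $x_\ell$ lies in $A_2$, and this is zero as soon as $k>n_1$, respectively $k>n_2$. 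Hence every nonzero entry of $J_{X=A}$ lies in a row of index at most $\max\{n_1,n_2\}\le n-1$, so $\operatorname{rank}J_{X=A}\le n-1<n$, a contradiction; the same bookkeeping covers the case of more than two diagonal blocks.

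For (b), I would invoke the differential of the characteristic-polynomial map: for any direction $E$, $\frac{d}{dt}\big|_{t=0}\det(zI-A-tE)=-\operatorname{tr}\!\big(\adj{zI-A}\,E\big)$. Taking $E$ to be each coordinate matrix in turn shows that the columns of $J_{X=A}$ are, up to sign, the coefficient vectors of individual entries of $\adj{zI-A}$, so the column space of $J_{X=A}$ is contained in the $\R$-span of the coefficient vectors of all entries of $\adj{zI-A}$. Because $\adj{zI-A}=\sum_{k=0}^{n-1}z^k M_k$ with each coefficient matrix $M_k$ a polynomial in $A$ whose leading term is $A^{n-1-k}$, that span has dimension $\dim_\R\operatorname{span}\{I,A,\dots,A^{n-1}\}$, which equals the degree of the minimal polynomial of $A$. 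Hence $\operatorname{rank}J_{X=A}\le\deg(\text{minimal polynomial of }A)\le n$, and if the rank is $n$ the minimal and characteristic polynomials coincide, which forces each eigenvalue of $A$ to have a one-dimensional eigenspace; that is, $A$ is nonderogatory.

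The main obstacle is the superpattern case of (c): the inverse function theorem alone realizes only characteristic polynomials near $z^n$, so it must be coupled with the scaling $M\mapsto tM$ to sweep out all monic polynomials, and one must check that inserting the extra nonzero entries of a superpattern with small fixed values does not shrink the image below a fixed ball — which is where a uniform, quantitative form of the inverse function theorem is needed. By comparison (a) and (b) are short: (a) is a direct consequence of the multiplicativity of characteristic polynomials under block-triangularization together with nilpotency, and (b) of the identification of the differential of $M\mapsto p_M$ with $\adj{zI-A}$ and hence with the powers $I,A,\dots,A^{n-1}$.
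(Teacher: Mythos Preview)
Your proof is correct. For parts (b) and (c) the paper simply cites \cite{CGKOVV} and \cite{DJOD}; what you wrote is essentially the content of those references (the inverse-function-theorem plus scaling argument for (c), and the identification of the columns of $J_{X=A}$ with entries of $\adj{zI-A}=\sum_{k}z^kM_k$, each $M_k$ a polynomial in $A$, for (b)). So on those two parts you are reproducing, not diverging from, the paper's intended proof.

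Part (a) is where you take a genuinely different route. The paper first invokes (b) to conclude that the nilpotent $A$ has index~$n$, and then observes that a reducible nilpotent matrix has index bounded by the order of its largest diagonal block, a contradiction. Your argument bypasses (b) entirely: from the factorization $p_X=p_{X_1}p_{X_2}$ and the vanishing of all $g^{(r)}_i$ at the nilpotent realization you read off directly that rows $k>\max\{n_1,n_2\}$ of $J_{X=A}$ vanish, so the rank is at most $n-1$. Both arguments are short; yours has the minor advantage of being self-contained (it does not need the adjugate machinery of (b)), while the paper's is conceptually cleaner once (b) is already in hand.

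One small simplification for the superpattern case in (c): you can avoid the ``quantitative inverse function theorem'' entirely. Treat the extra entries of $\cB$ as additional variables and apply the implicit function theorem to the full map at the point $(A,0)$ (where the new entries are set to zero); since the $n\times n$ block $\tilde J$ is already invertible there, you obtain, for all sufficiently small values of the new entries with the prescribed signs, a nearby nilpotent matrix lying in $Q(\cB)$ at which the same $n\times n$ Jacobian block is invertible by continuity. Then the base case of (c) applies verbatim to that nilpotent realization, and no uniform lower bound on the radius is needed.
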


\begin{proof}
Suppose $A\in Q(\cA)$ is a nilpotent matrix of order $n$ that
 allows a full-rank Jacobian. 
Part (c) is \cite[Theorem 3.1]{CGKOVV}, which is a
reframing of the nilpotent-Jacobian method introduced in 
\cite{DJOD}. Part (b) is \cite[Corollary 4.5]{CGKOVV}.  

If $A$ is a reducible nilpotent  matrix and $PAP^T$
is block triangular for some permutation matrix $P$, 
then the index of $A$ is at most the index of the largest order diagonal block
of $PAP^T$. Thus the index is bounded above by the order of the largest diagonal
block. Since the index of $A$ is $n$, it follows that $A$ is irreducible, proving part (a).
\end{proof}

Because of part (c) of Theorem~\ref{SAP}, \emph{minimal} spectrally arbitrary
patterns (that is, spectrally arbitrary patterns for which no proper subpattern
is spectrally arbitrary), are of special interest. For $n=2$ and $n=3$, the minimal spectrally
arbitrary patterns are well-known (see, e.g., \cite{Britz,CV}) and, up to equivalence,
are: 
$$\T_2=\left[\begin{array}{cc}
+&-\\
+&-
\end{array}
\right], \qquad \T_3=\left[\begin{array}{ccc}
+&-&0\\+&0&-\\0&+&-\end{array}\right],$$
$$ 
\U_3=\left[\begin{array}{ccc}
+&-&+\\+&-&0\\+&0&-\end{array}\right],
\ \V_3=\left[\begin{array}{ccc}
+&-&0\\+&0&-\\+&0&-\end{array}\right], {\hbox{\rm{\ and\ }}}
\ \W_3=\left[\begin{array}{ccc}
+&+&-\\+&0&-\\+&0&-\end{array}\right].
$$

\subsection{Bordering}

Let $A=[a_{ij}]$ be an order $n$ matrix, $\mathbf{x},\mathbf{z}\in\R^n$, 
and let $B$ be the \emph{bordered} matrix of order $n+1$: 
\begin{equation}\label{B}
B=
\left[\begin{array}{cc}
I_n&\mathbf{0}\\
\mathbf{x}^T&1
\end{array}\right]
\left[\begin{array}{cc}
A&\mathbf{z}\\
\mathbf{0}^T&0
\end{array}\right]
\left[\begin{array}{cc}
I_n&\mathbf{0}\\
-\mathbf{x}^T&1
\end{array}\right]=
\left[\begin{array}{c|c}
A-\mathbf{z}\mathbf{x}^T&\mathbf{z}\\ \hline
\mathbf{x}^T(A-\mathbf{z}\mathbf{x}^T)&\mathbf{x}^T\mathbf{z}
\end{array}\right].
\end{equation}
Since this is a similarity transformation,
it follows that $p_B(z)=zp_A(z)$, and thus $B$ is nilpotent if $A$ is nilpotent.
Note that (\ref{B}) is a special case of a construction introduced
in \cite[Theorem 3.1]{KOSVVV}.

Let $\mathbf{e}_i =[0,\dots,0,1,0,\ldots,0]^T$ with a $1$ in position $i$.
In this paper, we focus on the special cases 
$\mathbf{z}=\mathbf{e}_j$ 
and $\mathbf{x}^T=b \mathbf{e}_k$  
for some $b\neq 0$, which we call {\em{standard unit bordering}}.
In the next two sections we use bordering to construct higher order   
spectrally arbitrary patterns out of lower order patterns without having to recalculate 
a Jacobian matrix. In addition, at each stage, the construction provides
an explicit nilpotent realization of the spectrally arbitrary pattern.

\section{Standard unit bordering with equal indices}\label{unitequal}

Let $A=[a_{ij}]$, and denote the $k$th row of $A$ by $r_k(A)$.
Suppose $\mathbf{x}=a_{kk}\mathbf{z}=a_{kk}\mathbf{e}_k$ for some $a_{kk}\neq0$. 
Then $\mathbf{x}^TA=a_{kk}r_k(A)$ and $A-\mathbf{z}\mathbf{x}^T=A-a_{kk}P_{kk}$ where $P_{kk}$ has a $1$ in entry
$(k,k)$ and zeros elsewhere. In this case, the matrix $B$ in  (\ref{B}) is  
\begin{equation}\label{Be} 
B=\left[\begin{array}{c|c}
A-a_{kk}P_{kk}&\mathbf{e}_k\\ \hline
a_{kk}r_k(A-a_{kk}P_{kk})&a_{kk} \end{array}\right].
\end{equation}
 Let $A(u,v)$ denote the matrix obtained from $A$ by deleting
row $u$ and column $v$. 

\begin{theorem}\label{borderT}
Let $\A$ be a sign pattern of order $n$. Suppose $A=[a_{ij}]\in Q(\A)$ is a nilpotent matrix
and $A$ allows a full-rank Jacobian. 
Suppose $a_{kk}\neq0$ and $a_{kv}\neq 0$ for some $v\neq k$.
If $\det{A(k,v)}\neq 0$, 
then $B$  in $(\ref{Be})$ is a nilpotent matrix that allows a full-rank Jacobian and
hence every superpattern of $\cB=\sgn(B)$ is spectrally arbitrary. 
\end{theorem}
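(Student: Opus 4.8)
The plan is to verify directly that $B$ in $(\ref{Be})$ is nilpotent and then produce a full-rank Jacobian for $B$ by exhibiting $n+1$ suitable column-vectors among the partial derivatives of the coefficients of $p_B$. Nilpotency is already free: since $(\ref{Be})$ is the special case $\mathbf{z}=\mathbf{e}_k$, $\mathbf{x}=a_{kk}\mathbf{e}_k$ of the similarity $(\ref{B})$, we have $p_B(z)=zp_A(z)=z\cdot z^n=z^{n+1}$, so $B$ is nilpotent of index $n+1$. The real content is the rank of the $(n+1)\times m'$ Jacobian $J_{Y=B}$, where $Y$ is obtained from $B$ by replacing its nonzero entries with variables. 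I would choose the variables carefully: keep the $m$ variables $x_1,\dots,x_m$ sitting in the upper-left $A-a_{kk}P_{kk}$ block (plus wherever the shared symbol $a_{kk}$ lives), and introduce at least one fresh variable for the new border entries, say the entry $b$ in position $(n+1,n+1)$ (which equals $a_{kk}$ at the realization) and perhaps the $(n+1,k)$ entry. The key structural fact is that, since the border column is $\mathbf{e}_k$ with the $(n+1,k)$ and $(n+1,n+1)$ entries tied to row~$k$ of $A$, perturbing the ``old'' variables $x_1,\dots,x_m$ moves the coefficients of $p_B$ in a way controlled by the old Jacobian $J_{X=A}$, and I expect a relation of the form $p_Y(z)\approx z\,p_{X}(z)+(\text{correction})$ that lets me read off an $n\times n$ full-rank minor from the hypothesis that $A$ allows a full-rank Jacobian.

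Concretely, the steps I would carry out are: (1) Write $B=B(x_1,\dots,x_m,b)$ explicitly using $(\ref{Be})$, expand $p_B(z)=\det(zI_{n+1}-B)$ by cofactors along the last row or last column, and express the coefficients $g_1,\dots,g_{n+1}$ of $p_B$ in terms of the coefficients $f_1,\dots,f_n$ of $p_A$ and of minors of $A$. Expanding along the last column (which is $-\mathbf{e}_k$ off the diagonal, $z-b$ on it) should give $p_B(z)=(z-b)\det(zI_n - (A-a_{kk}P_{kk})) + (\text{a term involving }\det A(k,v)\text{-type minors of }zI_n-(A-a_{kk}P_{kk}))$, and at the realization $b=a_{kk}$ this must collapse to $z^{n+1}$. (2) Differentiate: $\partial g_i/\partial b$ at the realization should, up to a triangular change of coordinates, reproduce the coefficients of $p_A(z)=z^n$ shifted by one degree, i.e.\ give the column $(1,0,\dots,0)^T$-like vector that supplies the ``new'' dimension; meanwhile $\partial g_i/\partial x_j$ at the realization should equal $\partial f_i/\partial x_j$ plus lower-order-in-$z$ corrections, so the top $n\times m$ block of $J_{Y=B}$ is row-equivalent to $J_{X=A}$ bordered by a row of corrections. (3) Conclude that $J_{Y=B}$ has rank $n+1$: the old $n$ independent columns survive (possibly after adding a multiple of the $b$-column), and the $b$-column is independent of them because it is the unique one whose ``leading'' coordinate $\partial g_1/\partial b$ is nonzero, or symmetrically, because the new variable affects $g_{n+1}$ (the constant term / product of a new Smith-type quantity) in a way the old variables cannot.

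The main obstacle I anticipate is Step (1)/(2): making the cofactor expansion clean enough that the correction terms are visibly governed by $\det A(k,v)\neq 0$ and by the old Jacobian, rather than by some messier combination of minors. The hypotheses $a_{kk}\neq 0$, $a_{kv}\neq 0$ for some $v\neq k$, and $\det A(k,v)\neq 0$ are clearly there precisely to guarantee that the ``new'' column of the Jacobian (coming from a new border variable, likely the one in position $(n+1,v)$ or the coupling through entry $(k,v)$) is nonzero and lands outside the span of the old columns — so the delicate point is to identify exactly which new variable to differentiate with respect to and to show its derivative vector has a nonzero component in a coordinate where all old derivative vectors vanish. Once that separation is established, full rank follows immediately from the assumed full rank of $J_{X=A}$, and then part~(c) of Theorem~\ref{SAP} gives that every superpattern of $\cB=\sgn(B)$ is spectrally arbitrary.
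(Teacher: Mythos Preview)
Your overall strategy---nilpotency from the similarity, then a block-triangular Jacobian with the old $J_{X_A=A}$ supplying rank $n$ and one new border variable supplying the $(n{+}1)$-st direction---is exactly the paper's. But your concrete choices in Steps~(2)--(3) are the wrong ones, and the argument as written would not close.

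The gap is your choice of \emph{which} new variable and \emph{which} coordinate separates it. You propose a fresh variable $b$ at position $(n{+}1,n{+}1)$ and argue via $\partial g_1/\partial b$. This does not work: the old diagonal variables already move $g_1$, so there is no reason that column lies outside the span of the old ones, and nothing in this computation uses the hypothesis $\det A(k,v)\neq 0$. Moreover, your variable bookkeeping is inconsistent: in $B$ the $(k,k)$ entry of the upper block is \emph{zero}, so the old variable $x_{kk}$ has to migrate somewhere---the only place with value $a_{kk}$ is precisely $(n{+}1,n{+}1)$. You cannot put both $x_{kk}$ and a fresh $b$ there. The paper places $x_{kk}$ at $(n{+}1,n{+}1)$ and introduces fresh variables $y_\ell$ at the positions $(n{+}1,\ell)$, $\ell\neq k$, where $a_{k\ell}\neq 0$. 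Expanding along the last row gives
\[
p_{X_B}(z)=z\,p_{X_A}(z)+\sum_{\ell\neq k}(-1)^{n+\ell}\bigl(y_\ell - x_{kk}x_{k\ell}\bigr)\det\bigl([zI_n-X_A](k,\ell)\bigr).
\]
At the realization $y_\ell=a_{kk}a_{k\ell}=x_{kk}x_{k\ell}$, so the correction sum vanishes; after column operations (each $x_{k\ell}$-column plus $x_{kk}$ times the $y_\ell$-column) the old-variable block is exactly $J_{X_A=A}$ with a zero appended in row $n{+}1$. The separating coordinate is therefore $g_{n+1}$, not $g_1$: from the display, $\partial g_{n+1}/\partial y_v\big|_{X_B=B}=\pm\det A(k,v)\neq 0$, which is precisely where the hypothesis enters. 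Your final paragraph (``position $(n{+}1,v)$ \ldots affects $g_{n+1}$'') is the correct instinct; commit to that and drop the $b$/$g_1$ plan.
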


\begin{proof} Let $A\in Q(\A)$ be a nilpotent matrix and $X_A$ be a matrix with the nonzero pattern of
$\A$ having variable entries
such that the Jacobian $J_{X_A=A}$
has rank $n$. For convenience, assume $k=n$ and 
the last row of $X_A$ is $[x_{n1},x_{n2},\ldots, x_{nn}]$, recognizing
that some of these entries may be zero. Note that by assumption $x_{nv}$ and $x_{nn}$ are nonzero. 
Let $B$ be as in (\ref{Be}) and 
\begin{equation}\label{XB}
X_B=
\left[\begin{array}{ccc|c}
&&&\\
&X_A-x_{nn}P_{nn}&&\mathbf{0}\\ 
&&&1\\ \hline
&\mathbf{y}&0&x_{nn}
\end{array}
\right]
\end{equation}
with $\mathbf{y}=[y_1,y_2,\ldots, y_{n-1}]$ such that $y_i\neq 0$
if and only if  $x_{ni}\neq 0$. (Note that, other than the placement of the
variables in $\mathbf{y}$, the nonzero entries of $\mathbf{y}$ are 
independent of the variables in $X_A$.) Then $X_B$ has the nonzero pattern of $\cB$.
Using cofactor expansion along the last row of $X_B$ gives  
\begin{eqnarray*}
p_{X_B}(z)&=&\det(zI_{n+1}-X_B)\\
      &=&(z-x_{nn})\det(zI_n-X_A+x_{nn}P_{nn})+\sum_{\kl =1}^{n-1}(-1)^{n+\kl }y_\kl \det\left([zI_n-X_A](n,\kl )\right).\\      
\end{eqnarray*}
However, applying cofactor expansion along the last row of the first summand gives
\begin{eqnarray*}
\det(zI_n-X_A+x_{nn}P_{nn})&=&z\det\left([zI_n-X_A](n,n)\right)\\
         & & +\sum_{\kl =1}^{n-1}(-1)^{n+\kl }x_{n\kl }\det\left([zI_n-X_A](n,\kl )\right).
\end{eqnarray*}
Thus
\begin{eqnarray*}
p_{X_B}(z)&=&z\det(zI_n-{X_A}+x_{nn}P_{nn})-x_{nn}z\det([zI_n-X_A](n,n))\\
&+& \sum_{\kl =1}^{n-1}(-1)^{n+\kl }(y_\kl -x_{nn} x_{n\kl })\det\left([zI_n-X_A](n,\kl )\right).
\end{eqnarray*}
Since the determinant is linear in the rows (or using a rank 1 perturbation of a determinant), it follows that
\begin{equation}\label{BB}
p_{X_B}(z)=zp_{X_A}(z)+ \sum_{\kl =1}^{n-1}(-1)^{n+\kl }(y_\kl -x_{nn} x_{n\kl })\det\left([zI_n-X_A](n,\kl )\right).
\end{equation}
Focusing on the coefficients of $p_{X_{B}}(z)$, the second summand can be rewritten as
$$\sum_{r=3}^{n+1}\left[\sum_{\kl =1}^{n-1}S_{r,\kl }(y_\kl -x_{nn}x_{n\kl })\right]z^{n-r+1}$$
for some polynomials $S_{r,\kl }$ of the variable entries in $X_A$. 
To consider the Jacobian of $X_B$, we assume the last columns of $J_{X_B}$ are
indexed by the nonzeros of $x_{n1},\ldots,x_{nn},y_1,\ldots,y_{n-1}$. 
Let $m$ be the number of nonzero entries of $y$ and $w$ be the number of variables
in $X_A$. Then the $(n+1)\times(w+m)$ Jacobian matrix
$J_{X_B}$ is 
\begin{equation}\label{JacB}
J_{X_B}=
\left[\begin{array}{cc}
J_{X_A} & O\\
\mathbf{0}^T&\mathbf{0}^T
\end{array}\right]
+\sum_{\kl =1}^{n-1}(y_\kl -x_{nn}x_{n\kl })M_\kl +
\left[\begin{array}{cc}
O & N\\
\end{array}\right]
\end{equation}
for some matrices $M_\kl $ and $(n+1) \times (2m+1)$ matrix $N$ with columns indexed by 
the nonzeros of $x_{n1},\ldots,x_{nn},y_1,\ldots,y_{n-1}$. 
Note that by (\ref{B}) and (\ref{XB}), $y_\kl = a_{nn}a_{n\kl }=x_{nn}x_{n\kl }$ in the nilpotent
realization, so that we can ignore each matrix $M_\kl $ in (\ref{JacB}), since its
 coefficient vanishes at the nilpotent realization.
Further the column of $N$
corresponding to $y_\kl $ is
$\overrightarrow{N}_{y_\kl }=[0,0,S_{3,\kl },S_{4,\kl },\ldots,S_{n+1,\kl }]^T$ for $1\leq \kl \leq n$,
and in addition, the column corresponding to $x_{n\kl }$ is  
$\overrightarrow{N}_{x_{n\kl }}=-x_{nn}\overrightarrow{N}_{y_\kl }$ for $1\leq \kl \leq n-1$ 
and $\overrightarrow{N}_{x_{nn}}=\sum_{\kl =1}^{n-1}-x_{n\kl }\overrightarrow{N}_{y_\kl }$.
It follows that $N$ is column equivalent 
to $[\ O\ |\overrightarrow{N}_{y_1}|\overrightarrow{N}_{y_2}|\cdots|\overrightarrow{N}_{y_{n-1}}].$
From (\ref{BB}), with $z=0$, 
$$S_{n+1,\kl }=(-1)^{\kl -1}\det(X_A(n,\kl )),$$
giving
$$\left.S_{n+1,\kl }\right\vert_{X_B=B}=(-1)^{\kl -1}\det(A(n,\kl )).$$
Thus, the condition that 
there exists an index $v\neq n$ such that $a_{nv}\neq 0$ and $\det{A(n,v)}\neq 0$ implies
that $\left.S_{n+1,\kl }\right\vert_{X_B=B}\neq 0$ for some $\kl$, $1\leq \kl \leq n-1$. It follows that
$J_{X_B=B}$ is equivalent to
$$
\left[\begin{array}{cc}
J_{X_A=A} & *\\
\mathbf{0}^T&\mathbf{s}^T
\end{array}\right]
$$
for some $\mathbf{s}\neq 0$. Hence $B$ allows a full-rank Jacobian.
Thus by Theorem~\ref{SAP}, every superpattern of $\cB$ is spectrally 
arbitrary. 
\end{proof}

\begin{remark}\label{te}{\rm 
A method in \cite{KSVW} called \emph{triangle extension on arc} $(u,v)$ (in the digraph
associated with $\A$) 
is equivalent to a special case of 
applying Theorem~\ref{borderT} to row $u$ of $A$ and 
entry $(u,v)$, 
namely in the situation that $a_{uu}$ and $a_{uv}$ are the only nonzero entries in row $u$ of $A$.
}
\end{remark}

\begin{example}\label{E1}
{\rm If  
\[\A=\left[\begin{array}{cccc} 
0&+&0&0\\
0&-&+&0\\
+&0&0&+\\
+&0&-&+\\
\end{array}
\right]
\mbox{\rm{\qquad and \qquad}}
A=\left[\begin{array}{rrrr} 
0&1&0&0\\
0&-1&1&0\\
1&0&0&1\\
1&0&-1&1\\
\end{array}
\right],
\]
then $A$ is nilpotent, $A\in Q(\A)$, and $A$ allows a full-rank Jacobian. Hence $\cA$ is spectrally arbitrary ($A$ is equivalent to
the second matrix in Appendix A of \cite{CM}).
Further, $a_{44}\neq 0$, $a_{41}\neq 0$ and  $\det(A(4,1))\neq 0$. Applying Theorem~\ref{borderT}   
to row $4$ and entry $(4,1)$ gives a
spectrally arbitrary pattern $\cB_5$ with nilpotent matrix $B\in Q(\cB_5)$ for
\[ \cB_5=\left[\begin{array}{ccccc} 
0&+&0&0&0\\
0&-&+&0&0\\
+&0&0&+&0\\+&0&-&0&+\\
+&0&-&0&+\\
\end{array}
\right]
{\mbox{\rm{\qquad and \qquad}}}
B=\left[\begin{array}{rrrrr} 
0&1&0&0&0\\
0&-1&1&0&0\\
1&0&0&1&0\\1&0&-1&0&1\\
1&0&-1&0&1\\
\end{array}
\right].
\]
Note that since row $4$ of $A$ has more than one off-diagonal entry, triangle extension as described \cite{KSVW}
is not possible on the arc $(4,1)$ in the digraph associated with $\A$, demonstrating that Theorem~\ref{borderT} provides a more general technique than triangle extension in \cite{KSVW}.
}
\end{example}

\begin{remark} {\rm
Theorem~\ref{borderT} can be applied recursively. In particular, 
suppose $\det(A(n,v))\neq 0$ and Theorem~\ref{borderT} was applied to row $n$ and entry $(n,v)$ of $A$ to obtain $B$. 
It follows that  
$\det(B(n+1,v))=(-1)^n\det(A(n,v))\neq 0$ since there is only one nonzero
entry in the last column of $B(n+1,v),$ namely $1$ in the last row.  
Thus Theorem~\ref{borderT} can now be applied to row $n+1$ and entry $(n+1,v)$ of $B$. 
}
\end{remark}

\begin{example}\label{BN} {\rm 
The sign pattern $\B_5$ in Example~\ref{E1} can be recursively bordered using Theorem~\ref{borderT}, starting with 
row $5$ and entry $(5,1)$, to obtain
a spectrally arbitrary pattern of order $n\geq 6$, with $3n-4$ nonzero entries, of the form   
\[\B_n=\left[\begin{array}{ccccccc} 
0&+&&&&&\\
0&-&+&&&&\\
+&0&0&+&&O&\\
+&0&-&0&+&&\\
\vdots&\vdots&\vdots&\vdots&\ddots&\ddots&\\
+&0&-&0&\cdots&0&+\\
+&0&-&0&\cdots&0&+
\end{array}
\right].\]
Note that each nonzero entry of the nilpotent realization of $\cB_n$ has magnitude $1$.
It can be shown that $\B_5$ and $\B_4=\A$ in Example \ref{E1} are minimally spectrally arbitrary. 
}
\end{example}

\section{Standard unit bordering with unequal indices}\label{unitunequal}

Referring to (\ref{B}), suppose $\mathbf{x}=b \mathbf{e}_\elb$ for some $b\neq 0$ and 
$\mathbf{z}=\mathbf{e}_j$ for $j\neq \elb$; thus
$\mathbf{x}^T\mathbf{z}=0$.  
With the $\elb$th row of $A$ denoted by $r_\elb (A)$, 
 $\mathbf{x}^TA=br_\elb (A)$ and $A-\mathbf{z}\mathbf{x}^T=A-bP_{j\elb}$ where $P_{j\elb}$ has a $1$ in entry
$(j,\elb)$ and zeros elsewhere. In this case, the matrix $B$ in  (\ref{B}) is 
\begin{equation}\label{B2}
B=\left[\begin{array}{c|c}
A-bP_{j\elb}&\mathbf{e}_j\\ \hline
br_\elb (A-bP_{j\elb})&0 \end{array}\right].
\end{equation}

Recall that $X_A$ is obtained from $A$ be replacing some of the nonzero entries with variables. In the case that
$J_{X=A}$ has rank $n$, we call a nonzero entry of $A$ \emph{Jacobian in $X_A$} if it is replaced by a
variable in $X_A$, otherwise the entry is \emph{non-Jacobian in $X_A$}. 
Note that a non-Jacobian entry may be zero. 
To simplify the next proof, for $U,V\subseteq \{1,2,\ldots,n\}$,  
let $A(U,V)$ denote the matrix obtained from $A$ by deleting
the rows in $U$ and the columns in $V$. 

\begin{theorem}\label{borderT2}
Let $\A$ be a sign pattern of order $n$. Suppose $A=[a_{ij}]\in Q(\A)$ is a nilpotent matrix 
and $A$ allows a full-rank Jacobian. Suppose $a_{j\elb }$, $j\neq \elb$, is non-Jacobian for some 
choice of $X_A$. 
If $a_{\elb v}\neq0$ 
and $\det{A(j,v)}\neq 0$, for some $v$, 
then $B$  in $(\ref{B2})$ is a nilpotent matrix that allows a full-rank Jacobian and
hence every superpattern of $\cB=\sgn(B)$ is spectrally arbitrary. 
\end{theorem}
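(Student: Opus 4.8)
The plan is to mirror the proof of Theorem~\ref{borderT} as closely as possible, adjusting for the fact that now $\mathbf{z}=\mathbf{e}_j$ and $\mathbf{x}^T=b\mathbf{e}_\elb$ with $j\neq\elb$, so that $\mathbf{x}^T\mathbf{z}=0$ and $B$ has a zero in its bottom-right corner, as displayed in (\ref{B2}). First I would set up a variable matrix $X_B$ analogous to (\ref{XB}): take $X_A$ (with $a_{j\elb}$ \emph{not} replaced by a variable, which is permitted by hypothesis), subtract $b P_{j\elb}$ at the appropriate spot (this perturbs the non-Jacobian $(j,\elb)$ entry), place $\mathbf{e}_j$ as the new last column up to row $n$, and put $b\,r_\elb$ of the perturbed matrix in the new last row, introducing fresh variables $y_1,\ldots,y_{n-1}$ in the bottom row whose zero pattern matches that of $r_\elb(A)$, together with a variable (or the constant $0$) in the bottom-right corner. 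Since the border introduces a genuine similarity, $p_{X_B}(z)=z\,p_{X_A}(z)$ still holds at the nilpotent realization, but more to the point I would again run cofactor expansion along the last row (and then along row/column $j$) to get an explicit formula
\begin{equation*}
p_{X_B}(z)=z\,p_{X_A}(z)+\sum_{\kl=1}^{n-1}(-1)^{?}(y_\kl-b\,x_{\elb\kl})\,\det\!\left([zI_n-X_A](j,\kl)\right)
\end{equation*}
paralleling (\ref{BB}), where I'll need to be a little careful tracking the sign and the exact minor $[zI_n-X_A](j,\kl)$ that appears, since the deleted row is now $j$ rather than the last row. The key point is that the coefficient of $y_\kl$ in the top-degree-relevant part is, up to sign, $\det([zI_n-X_A](j,\kl))$, whose value at $z=0$ and at $X_A=A$ is $\pm\det(A(j,\kl))$.

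Next I would analyze the Jacobian $J_{X_B}$ exactly as in (\ref{JacB}): it decomposes as the old Jacobian $J_{X_A=A}$ sitting in the top-left block (padded by a zero row), plus a sum of terms with coefficients $(y_\kl-b\,x_{\elb\kl})$ that vanish at the nilpotent realization (because $y_\kl=b\,a_{\elb\kl}$ there, by construction of $B$ in (\ref{B2})), plus a matrix $N$ whose columns are indexed by the new variables $y_1,\ldots,y_{n-1}$ and by the copies of $x_{\elb 1},\ldots,x_{\elb n}$ appearing in the bottom row. As before, the columns of $N$ corresponding to the $x_{\elb\kl}$'s are scalar multiples of the columns corresponding to the $y_\kl$'s, so $N$ is column-equivalent to the block whose columns are the $\overrightarrow{N}_{y_\kl}$'s alone, and each $\overrightarrow{N}_{y_\kl}$ has a zero in the rows corresponding to the first $n$ coordinates except that its bottom entry evaluates to $\pm\det(A(j,\kl))$. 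Hence $J_{X_B=B}$ is equivalent to
\begin{equation*}
\left[\begin{array}{cc}J_{X_A=A}&*\\ \mathbf{0}^T&\mathbf{s}^T\end{array}\right]
\end{equation*}
with $\mathbf{s}\neq\mathbf{0}$ precisely when some $\det(A(j,\kl))\neq0$ — and the hypothesis gives us $v$ with $a_{\elb v}\neq0$ and $\det(A(j,v))\neq0$, which both ensures the relevant $y_v$-column is actually present in $X_B$ (because $a_{\elb v}\neq0$ forces $y_v$ to be a genuine variable) and that its bottom entry is nonzero. Since $J_{X_A=A}$ has rank $n$ by hypothesis, this block form has rank $n+1$, so $B$ allows a full-rank Jacobian, and nilpotency of $B$ follows from $p_B(z)=z\,p_A(z)=z^{n+1}$; then Theorem~\ref{SAP}(c) finishes the argument.

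The main obstacle I anticipate is bookkeeping rather than conceptual: because the subtracted perturbation $-bP_{j\elb}$ now sits at an off-diagonal position $(j,\elb)$ and the bordered row reproduces the $\elb$th row (not the $j$th), the cofactor expansions no longer collapse as cleanly as in Theorem~\ref{borderT}, where $j=\elb=k$ made everything about a single row. I will need to verify carefully that the linearity-in-rows / rank-one-perturbation step still produces the clean identity $p_{X_B}(z)=z\,p_{X_A}(z)+(\text{correction})$ with the correction governed by the minors $[zI_n-X_A](j,\kl)$, and in particular that no extra terms survive from the interaction between row $j$ and row $\elb$; the non-Jacobian hypothesis on $a_{j\elb}$ is exactly what lets us absorb the $-bP_{j\elb}$ perturbation without disturbing the original rank-$n$ Jacobian block. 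Once that identity is pinned down with the correct signs, the Jacobian analysis is a routine transcription of the argument already given for Theorem~\ref{borderT}.
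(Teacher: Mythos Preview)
Your overall strategy matches the paper's, but the displayed formula you propose for $p_{X_B}(z)$ is incomplete, and the gap is more than bookkeeping. Taking $j=1$ and $\elb=n$ for convenience, cofactor expansion along the last row of $X_B$ yields
\[
p_{X_B}(z)=z\det(zI_n-X_A+bP_{1n})+\sum_{\ell=1}^{n}(-1)^{\ell}y_\ell\det\bigl([zI_n-X_A](1,\ell)\bigr),
\]
and the crucial difference from Theorem~\ref{borderT} is that these minors are obtained by deleting row $j=1$, \emph{not} row $\elb=n$, so they still contain the row-$n$ variables $x_{n1},\ldots,x_{nn}$. When you expand these minors further along their bottom row in order to extract factors $(y_\ell-bx_{n\ell})$, you do recover those linear terms, but you also pick up genuinely new \emph{quadratic} cross-terms of the form $(y_\ell x_{ni}-y_ix_{n\ell})$ multiplying $(n-2)\times(n-2)$ minors $\det\bigl([zI_n-X_A](\{1,n\},\{i,\ell\})\bigr)$. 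These do not cancel; the clean analogue of~(\ref{BB}) that you wrote down is false. This is exactly the ``interaction between row $j$ and row $\elb$'' you flagged, but extra terms do survive.

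What rescues the argument is that at the nilpotent realization $y_\ell=bx_{n\ell}$, so both the linear coefficients $(y_\ell-bx_{n\ell})$ \emph{and} the quadratic coefficients $(y_\ell x_{ni}-y_ix_{n\ell})$ vanish there. Hence the Jacobian decomposition has two families of correction matrices (the paper calls them $M_\ell$ and $H_{i,\ell}$) rather than one, but both drop out at $X_B=B$. You must then still verify that the remaining block $N$ satisfies $\overrightarrow{N}_{x_{n\ell}}=-b\,\overrightarrow{N}_{y_\ell}$ at the realization; with the cross-terms present this is no longer immediate, since each $\overrightarrow{N}_{y_\ell}$ now carries both an $\mathbf{s}_\ell$ piece and $\mathbf{t}_{i\ell}$ pieces, and the relation holds only because $y_i=bx_{ni}$ at the realization. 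Once that is checked, your block form and the identification of the bottom entry as $\pm\det A(j,\ell)$ go through as you say. Two minor corrections: $\mathbf{y}$ has $n$ components, not $n-1$ (row $\elb$ of $A-bP_{j\elb}$ is unchanged, so its diagonal entry may be nonzero), and correspondingly the sum over $\ell$ runs to $n$.
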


\begin{proof}
Let $A\in Q(\A)$ be a nilpotent matrix and $X_A$ be a matrix with the nonzero pattern of
$\A$ having variable entries
such that the Jacobian $J_{X_A=A}$
has rank $n$ with no variable placed in position $(j,\elb)$.  
For convenience, assume that $j=1$, $\elb=n$, and the last row of $X_A$ is $[x_{n1},x_{n2},\ldots, x_{nn}]$, recognizing
that some of these entries may be zero. 
Let $B$ be as in (\ref{B2}) and
\begin{equation}\label{XBQ}
X_B=
\left[\begin{array}{ccc|c}
&&&1\\
&X_A-bP_{1n}&&\mathbf{0}\\ 
&&&\\ \hline
&\mathbf{y}&&0
\end{array}
\right]
\end{equation}
with $\mathbf{y}=[y_1,y_2,\ldots, y_{n}]$ such that $y_i\neq 0$
if and only if  $x_{ni}\neq 0$. (Note that, other than the placement of the
variables in $\mathbf{y}$, the nonzero entries of $\mathbf{y}$ are 
independent of the variables in $X_A$.) Then $X_B$ has the nonzero pattern of $\cB$.
Using cofactor expansion along the last row of $X_B$ gives
\begin{eqnarray}
p_{X_B}(z)&=&\det(zI_{n+1}-X_B)\nonumber \\ 
      &=&z\det(zI_n-X_A+bP_{1n})+\sum_{\kk=1}^{n}(-1)^{\kk}y_\kk\det\left([zI_n-X_A](1,\kk)\right) \nonumber\\      
	 &=& zp_{X_A}(z) +(-1)^{n+1}zb\det\left([zI_n-X_A](1,n)\right) \label{H1}\\
	 & & \qquad\qquad  +\sum_{\kk=1}^{n}(-1)^{\kk}y_\kk\det\left([zI_n-X_A](1,\kk)\right).\nonumber
\end{eqnarray}
Let $W_\kk=z\det\left([zI_n-X_A](\{1,n\},\{\kk,n\})\right)$. Applying cofactor 
expansion on the determinant in the second summand of (\ref{H1}) gives
\begin{eqnarray}\label{H2}
z\det\left([zI-X_A](1,n)\right)=\sum_{\kk=1}^{n-1}(-1)^{n+\kk}x_{n\kk}W_\kk.
\end{eqnarray}
Using the fact that the last row of $zI_n-X_A$ is $[0,\cdots,0,z]-[x_{n1},x_{n2},\ldots, x_{nn}],$ 
and that a determinant is linear in the last row, 
\begin{eqnarray}\label{H3}
\sum_{\kk=1}^{n}(-1)^{\kk}y_\kk\det\left([zI_n-X_A](1,\kk)\right)
=\sum_{\kk=1}^{n-1}(-1)^{\kk}y_\kk W_\kk 
+\sum_{\kk=1}^{n}(-1)^{\kk}y_\kk U_\kk
\end{eqnarray}
for $$U_\kk=\det\left([zI_n-X_A](1,\kk)-z\left[\begin{array}{cc}O&\mathbf{0}\\\mathbf{0}^T&1\end{array}\right]\right),
\qquad {\rm{with \quad}} 1\leq \kk \leq n-1,$$
and $U_n=\det([zI_n-X_A](1,n))$.
Using (\ref{H2}) and (\ref{H3}) in (\ref{H1}) gives
\begin{eqnarray*}
p_{X_B}(z)&=&zp_{X_A}(z)+\sum_{\kk=1}^{n-1}\left((-1)^{\kk+1}bx_{n\kk}W_\kk+(-1)^{\kk}y_\kk W_\kk  \right)
+\sum_{\kk =1}^{n}(-1)^{\kk }y_\kk  U_\kk .
\end{eqnarray*}
However, using cofactor expansion along the last row of the matrix in $U_\kk $ gives
\begin{eqnarray*}
U_\kk  &=& \sum_{i=1}^{\kk -1}(-1)^{n+i}x_{ni}\det\left([zI_n-x_A](\{1,n\},\{i,\kk \}) \right)\\
&&+\sum_{i=\kk +1}^n(-1)^{n+i-1}x_{ni}\det\left([zI_n-x_A](\{1,n\},\{\kk ,i\}) \right).
\end{eqnarray*}
Thus 
\begin{eqnarray*}
p_{X_B}=zp_{X_A}&+&\sum_{\kk =1}^{n-1}(-1)^{\kk }(y_\kk -bx_{n\kk })W_\kk  \nonumber\\
&+&\sum_{1\leq i<\kk \leq n}(y_\kk x_{ni}-y_ix_{n\kk })(-1)^{n+i+\kk }\det\left([zI_n-X_A](\{1,n\},\{\kk ,i\})\right).
\end{eqnarray*}
Focusing on the coefficients of $p_{X_{B}}(z)$, we can rewrite $p_{X_B}(z)$ as
\begin{equation}\label{pB}
zp_{X_A}(z)+\sum_{r=3}^{n+1}\left[\sum_{\kk =1}^{n-1}S_{r,\kk }(y_\kk -bx_{n\kk })\right]z^{n-r+1} +
\sum_{r=5}^{n+1}\left[\sum_{1\leq i<\kk \leq n}T_{r,i,\kk }(y_\kk x_{ni}-y_ix_{n\kk })\right]z^{n-r+1}
\end{equation}
for some polynomials $S_{r,\kk }$ and $T_{r,i,\kk }$ in the variable entries of $X_A$. 
Note that the variables in the last row of $X_A$ do not appear in $S_{r,\kk }$ or $T_{r,i,\kk }$.
To consider the Jacobian of $X_B$, we assume the last columns of $J_{X_B}$ are
indexed by the nonzeros of $x_{n1},\ldots,x_{nn}$, and $y_1,\ldots,y_{n}$. 
Let $m$ be the number of nonzero entries of $y$ and $w$ be the number of variables
in $X_A$. Since $x_{1n}$ is non-Jacobian in $X_A$, the $(n+1)\times(w+m)$ Jacobian matrix
$J_{X_B}$ is 
\begin{equation}\label{JacB2}
J_{X_B}=
\left[\begin{array}{cc}
J_{X_A} & O\\
\mathbf{0}^T&\mathbf{0}^T
\end{array}\right]
+\sum_{\kk =1}^{n-1}(y_\kk -bx_{n\kk })M_\kk 
+\sum_{1\leq i < \kk \leq n}(y_\kk x_{ni}-y_ix_{n\kk })H_{i,\kk }
+
\left[\begin{array}{cc}
O & N\\
\end{array}\right]
\end{equation}
for some matrices $M_\kk $, $H_{i,\kk }$ and $(n+1) \times (2m)$ matrix $N$ with columns indexed by 
the nonzeros of $x_{n1},\ldots,x_{nn},y_1,\ldots,y_{n}$.
Note that by (\ref{B2}) and (\ref{XBQ}), $y_\kk = ba_{n\kk }=bx_{n\kk }$ in the nilpotent
realization, so that we can ignore each matrix $M_\kk $ and $H_{i,\kk }$ in (\ref{JacB2}), since their
 coefficients vanish at the nilpotent realization.
Let $\mathbf{s}_\kk =[0,0,S_{3,\kk },S_{4,\kk },\ldots,S_{n+1,\kk }]^T$
and $\mathbf{t}_{i\kk }=[0,0,0,0,T_{5,i,\kk },T_{6,i,\kk },\ldots,T_{n+1,i,\kk }]^T.$
By (\ref{pB}), the column of $N$
corresponding to $y_\kk $ is
$$\overrightarrow{N}_{y_\kk }=\mathbf{s_\kk }+\sum_{i=1}^{\kk -1}x_{ni}\mathbf{t}_{i\kk }-\sum_{i=\kk +1}^nx_{ni}\mathbf{t}_{i\kk },$$ 
and the column corresponding to $x_{n\kk }$ is 
$$\overrightarrow{N}_{x_{n\kk }}=-b\mathbf{s_\kk }-\sum_{i=1}^{\kk -1}y_{i}\mathbf{t}_{i\kk }+
\sum_{i=\kk +1}^ny_{i}\mathbf{t}_{i\kk }$$ for $1\leq \kk \leq n-1$.
Thus, evaluated at the nilpotent realization with  $y_i= ba_{ni}=bx_{ni}$,
$\overrightarrow{N}_{x_{n\kk }}=-b\overrightarrow{N}_{y_\kk }$ for $1\leq \kk \leq n-1.$
Further $\overrightarrow{N}_{y_n}=\sum_{i=1}^{n-1}x_{ni}\mathbf{t}_{in}$ with
 $\overrightarrow{N}_{x_{nn}}=\sum_{i=1}^{n-1}(-y_{i})\mathbf{t}_{in}$. 
It follows that $N\vert_{X_B=B}$ is column equivalent 
to $[\ O\ |\overrightarrow{N}_{y_1}|\overrightarrow{N}_{y_2}|\cdots|\overrightarrow{N}_{y_{n}}].$

From (\ref{H1}), with $z=0$, the $(n+1)$ entry of $\overrightarrow{N}_{y_\kk }$ is  
$$(-1)^{\kk }\det X_A(1,\kk ),$$
which evaluated at $X_B=B$ is 
$$
(-1)^{\kk }\det A(1,\kk ).
$$
Thus, the hypothesis that 
there exists an index $v$ such that $a_{\elb v}\neq 0$ and $\det{A(j,v)}\neq 0$ 
implies
that the $(n+1)$ entry of $\overrightarrow{N}_{y_v}$ 
is nonzero. 
It follows that
$J_{X_B=B}$ is equivalent to
\begin{equation}\label{J2}
\left[\begin{array}{cc}
J_{X_A=A} & *\\
\mathbf{0}^T&\mathbf{r}^T
\end{array}\right]
\end{equation}
for some $\mathbf{r}\neq \mathbf{0}$. Hence $B$ allows a full-rank Jacobian.
Thus by Theorem~\ref{SAP}, every superpattern of $\cB$ is spectrally 
arbitrary. 
\end{proof}

 \begin{example}\label{Three}\rm{ 
Starting with $\T_2$, the unique spectrally arbitrary pattern of order $2$ up to equivalence \cite{DJOD}, 
the bordering technique of Theorem~\ref{borderT2} 
gives spectrally arbitrary patterns of order $3$.
In particular, consider the nilpotent matrix
\[A=\left[
\begin{array}{rr}
1&-1\\
1&-1\\
\end{array}\right]\in Q(\T_2)
\rm{\quad and\ let \quad}
X_A=\left[
\begin{array}{rr}
x_1&-1\\
x_2&-1\\
\end{array}\right].\]
Then $J_{X=A}$ has full rank and entry $a_{12}$ is non-Jacobian in $X_A$. 
Thus, the bordering technique of Theorem~\ref{borderT2} gives the matrix
\[\left[
\begin{array}{rcr}
1&-1-b&1\\
1&-1&0\\
b&-b&0
\end{array}\right],\]
providing different spectrally arbitrary patterns depending on the chosen
value of $b\neq 0$. Taking $b=\frac{1}{2}$ gives a sign pattern equivalent to $\W_3$ (see \cite{Britz}) 
with a full-rank Jacobian. 
Taking $b=-\frac{1}{2}$ gives a pattern equivalent to a superpattern of $\V_3$ (see \cite{Britz}) with
a full-rank Jacobian. Taking $b=-1$ gives a pattern equivalent to $\V_3$ with 
a full-rank Jacobian. This last option, using $b=a_{12}$ maintains sparsity (i.e, 
it gives a minimal spectrally arbitrary pattern.)
}\end{example}

\begin{remark}{\rm
With a well-chosen example, Theorem~\ref{borderT2} can be applied recursively. In particular, note that in (\ref{J2}),
the $n$ variables of $X_A$ are used to show that $B$ allows a full-rank Jacobian. 
Thus, at most one of the nonzero entries in row $n+1$ of $B$ needs to be Jacobian in $X_B.$
Further, an entry in row $n+1$ that is Jacobian in $X_B$ can be chosen to be any nonzero position $(n+1,v)$ for which 
$\det A(j,v)$ is nonzero. Note that, since the last column of $B$ has only one nonzero entry,
 $\det B(n+1,v)=(-1)^{j+n} \det A(j,v)$. Thus, if 
 there is more than one $v$ with $a_{kv}\neq 0$, and $\det A(j,v)\neq 0$, then bordering can be 
repeated recursively, applying it to $(j,k)=(n+1,k)$ in $B$.
}\end{remark}

\begin{example}\label{KN}{\rm
Consider the nilpotent realization 
$$\left[\begin{array}{rrr}
1&-1&0\\
1&0&-1 \\
1&0&-1
\end{array}\right]
$$
of the spectrally arbitrary pattern $\V_3$.
By applying Theorem~\ref{borderT2} with $b=k=1$, $j=3$ and $v=2$,  
and repeating recursively, increasing $j$ but keeping $b=k=1$ and $v=2$, 
a spectrally arbitrary pattern $\K_n$ is obtained for $n\geq 4$, with
\[\K_n=\left[\begin{array}{ccccccc} 
+&-&&&&&\\
+&0&-&&&O&\\
0&0&-&+&&&\\
0&-&0&0&+&&\\
\vdots&\vdots&\vdots&\vdots&\ddots&\ddots&\\
0&-&0&0&\cdots&0&+\\
+&-&0&0&\cdots&0&0
\end{array}
\right].\]
The nonzero entries in a nilpotent realization of $\K_n$ have magnitude $1$. 
}\end{example}

As far as we know, the spectrally arbitrary sign patterns $\B_n$ in Example~\ref{BN}
and $\K_n$ in Example~\ref{KN} have not previously appeared in the literature.

\medskip

\section{General bordering for $n=3$}

In Theorems~\ref{borderT} and \ref{borderT2}, we restricted to bordering 
with standard unit vectors in the place of $\mathbf{x}$ and $\mathbf{z}$ in
(\ref{B}). 
We next illustrate the more general bordering (\ref{B}) with a couple of
examples.

\begin{example}\label{T3}\rm{
Starting with the nilpotent realization $A$ of $\T_2$ given in
Example~\ref{Three}, a nilpotent realization of $\T_3$ can be obtained
as follows:
\begin{eqnarray*}
B=\left[
\begin{array}{rr|r}
1&0&0\\
0&1&0\\ \hline
-\frac{1}{2}&1&1\\
\end{array}\right]
\left[
\begin{array}{rr|r}
1&-1&0\\
1&-1&-1\\ \hline
0&0&0\\
\end{array}\right]
\left[\begin{array}{rr|r}
1&0&0\\
0&1&0\\ \hline
\frac{1}{2}&-1&1\\
\end{array}\right]&=&
\left[\begin{array}{rrr}
1&-1&0\\
\frac{1}{2}&0&-1\\
0&\frac{1}{2}&-1\\
\end{array}\right]\in Q(\T_3).
\end{eqnarray*}
Matrix $B$ allows a full-rank Jacobian and hence (as is well-known \cite{DJOD}) every superpattern
of $\T_3$ is spectrally arbitrary by Theorem~\ref{SAP}.
}\end{example}

\begin{example}\label{U3} \rm{Starting with the nilpotent realization $A$ of $\T_2$ given in
Example~\ref{Three}, a nilpotent realization of $\U_3$ (see \cite{Britz}) can be obtained as follows:  
\begin{eqnarray*}
B=\left[
\begin{array}{rr|r}
1&0&0\\
0&1&0\\ \hline
-1&2&1\\
\end{array}\right]
\left[
\begin{array}{rr|r}
1&-1&\frac{1}{2}\\
1&-1&0\\ \hline
0&0&0\\
\end{array}\right]
\left[\begin{array}{rr|r}
1&0&0\\
0&1&0\\ \hline
1&-2&1\\
\end{array}\right]&=&
\left[\begin{array}{rrr}
\frac{3}{2}&-2&\frac{1}{2}\\
1&-1&0\\
\frac{1}{2}&0&-\frac{1}{2}\\
\end{array}\right].
\end{eqnarray*}
In particular, matrix $B$ is a nilpotent realization of $\U_3$ that allows a full-rank Jacobian
and hence every superpattern
of $\U_3$ is spectrally arbitrary by Theorem~\ref{SAP}.
}\end{example}

As demonstrated in \cite{Britz}, every  spectrally arbitrary 
sign pattern of order $3$ is a superpattern of one of the four patterns 
$\T_3$, $\U_3$, $\V_3$ and $\W_3$.  
From Example~\ref{Three}, every superpattern 
of $\V_3$ and $\W_3$ is spectrally arbitrary by Theorem~\ref{borderT2} using a standard unit
bordering of $\T_2$.  The other two order $3$
patterns can be obtained by using a general bordering of $\T_2$ as demonstrated in 
Examples~\ref{T3} and \ref{U3}.

\begin{corollary}
Every spectrally arbitrary sign pattern of order $3$ is a superpattern of a pattern obtained from $\T_2$ 
by bordering as in $(\ref{B})$.
\end{corollary}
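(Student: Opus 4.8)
The plan is to combine the three facts already established in the excerpt. First, recall from \cite{Britz} (quoted just before the corollary) that every spectrally arbitrary sign pattern of order $3$ is a superpattern of one of the four minimal patterns $\T_3$, $\U_3$, $\V_3$, $\W_3$. Since the superpattern relation is transitive, it suffices to show that each of these four patterns is itself a superpattern of a pattern obtained from $\T_2$ by a bordering of the form (\ref{B}). (Indeed each of the four will turn out to be obtainable as such a pattern on the nose, not merely as a superpattern, but the weaker statement is all that is needed.)

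Second, I would dispatch $\V_3$ and $\W_3$ by invoking Example~\ref{Three}: there we start from the nilpotent realization $A=\left[\begin{smallmatrix}1&-1\\1&-1\end{smallmatrix}\right]\in Q(\T_2)$ with $X_A$ having $a_{12}$ non-Jacobian, and standard unit bordering (Theorem~\ref{borderT2}, which is the special case $\mathbf z=\mathbf e_j$, $\mathbf x^T=b\mathbf e_k$ of (\ref{B})) produces, for $b=-\tfrac12$, a superpattern of $\V_3$, for $b=-1$ the pattern $\V_3$ itself, and for $b=\tfrac12$ the pattern $\W_3$. Each of these is obtained by the bordering (\ref{B}), so every spectrally arbitrary superpattern of $\V_3$ or of $\W_3$ is a superpattern of a pattern obtained from $\T_2$ by (\ref{B}).

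Third, for $\T_3$ and $\U_3$ I would cite Examples~\ref{T3} and \ref{U3}, where the same nilpotent realization $A$ of $\T_2$ is bordered using the general form (\ref{B}) — there with non-unit vectors $\mathbf x=[-\tfrac12,1]^T$, $\mathbf z=[0,-1]^T$ in the $\T_3$ case and $\mathbf x=[-1,2]^T$, $\mathbf z=[\tfrac12,0]^T$ in the $\U_3$ case — yielding nilpotent realizations of $\T_3$ and of $\U_3$ respectively. Hence $\T_3$ and $\U_3$ are each patterns obtained from $\T_2$ by (\ref{B}). Putting the three cases together: an arbitrary spectrally arbitrary order-$3$ pattern $\cA$ is a superpattern of one of $\T_3,\U_3,\V_3,\W_3$, each of which is (a superpattern of) a pattern obtained from $\T_2$ by bordering as in (\ref{B}), so by transitivity $\cA$ is a superpattern of such a pattern, which is the claim.

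There is essentially no obstacle here; the corollary is a bookkeeping summary of Examples~\ref{Three}, \ref{T3}, \ref{U3} together with the classification result of \cite{Britz}. The only point requiring a word of care is that one must observe that the four explicit borderings in those examples really do instantiate the scheme (\ref{B}) — for $\V_3$ and $\W_3$ via the standard-unit specialization used in Theorem~\ref{borderT2}, and for $\T_3$ and $\U_3$ via the displayed triple-product matrix identities — and then appeal to transitivity of the superpattern relation to pass from the four minimal patterns to an arbitrary spectrally arbitrary order-$3$ pattern.
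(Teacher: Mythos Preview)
Your proposal is correct and follows essentially the same approach as the paper: the corollary is stated immediately after the paragraph that assembles exactly the ingredients you list---the classification from \cite{Britz} reducing to $\T_3,\U_3,\V_3,\W_3$, the standard unit borderings of Example~\ref{Three} for $\V_3$ and $\W_3$, and the general borderings of Examples~\ref{T3} and~\ref{U3} for $\T_3$ and $\U_3$. No further argument is given (or needed) in the paper, and your write-up matches this reasoning.
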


\section{Inertially arbitrary borderings}

We conclude by extending the main results in Sections~\ref{unitequal} and \ref{unitunequal} to
obtain inertially arbitrary sign patterns.
The \emph{inertia} of a matrix $A$ is the ordered triple $i(A)=(a,b,c)$ for which $a$ is the number of
eigenvalues of $A$ with positive real parts, $b$ is the number with negative real parts, and
$c$ is the number of eigenvalues with real parts zero. The \emph{refined inertia} of a matrix $A$ 
is the ordered $4$-tuple $ri(A)=(a,b,c_1,c_2)$ for which $c_1$ is the algebraic multiplicity of zero as an eigenvalue
for $A$ and $c_1+c_2=c$. Then $c_2$ is the number of nonzero imaginary eigenvalues of $A$. 
A sign pattern $\A$  of order $n$ is \emph{inertially arbitrary} if, for every non-negative integer choice of 
$(a,b,c)$ with $a+b+c=n$, there is some matrix $A\in Q(\A)$ with $i(A)=(a,b,c)$.
As with nilpotent matrices, a matrix $A$ of order $n$ with refined inertia 
$(0,0, c_1,c_2)$ \emph{allows a full-rank Jacobian} if the
Jacobian matrix $J_{X=A}$ has rank $n$.

The next theorem combines \cite[Theorem $2.13$]{CF} and \cite[Corollary $4.5$]{CGKOVV}.

\begin{theorem}\label{NJ-IAP}
Let $\A$ be a sign pattern and $A\in Q(\A)$ be a matrix with $\ri{A}=(0,0,c_1,c_2)$ for some $c_1\geq 2$.
If $A$ allows a full-rank Jacobian,
then 
\begin{enumerate}
\item[(a)] $A$ is nonderogatory, and
\item[(b)] every superpattern of $\cA$ is inertially arbitrary.
\end{enumerate}
\end{theorem}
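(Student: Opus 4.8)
The plan is to prove Theorem~\ref{NJ-IAP} by citing the two sources it combines, exactly as was done for Theorem~\ref{SAP}. First I would recall the setup: $A\in Q(\A)$ has refined inertia $(0,0,c_1,c_2)$ with $c_1\ge 2$ (so $z^2$ divides $p_A(z)$ but $z^3$ need not), and the Jacobian $J_{X=A}$ of the characteristic-polynomial coefficients with respect to the variable entries of $X_A$ has full rank $n$. The full-rank condition gives a local diffeomorphism between the variable entries and the coefficient vector $(f_1,\dots,f_n)\in\R^n$ near the realization, so every monic degree-$n$ polynomial sufficiently close to $p_A(z)=z^{n}+\cdots$ is achieved by some matrix in $Q(\A)$; the condition $c_1\ge 2$ is what lets one perturb the double root at $0$ into a conjugate pair on the imaginary axis, a pair of real eigenvalues of opposite sign, or keep both at zero, which is the ingredient needed to reach every inertia $(a,b,c)$ with $a+b+c=n$.

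The key steps, in order, are: (i) invoke \cite[Corollary~4.5]{CGKOVV} for part~(a) — the full-rank Jacobian forces $A$ to be nonderogatory, which is purely a statement about a single realization and does not use the refined-inertia hypothesis beyond what is already present; (ii) invoke \cite[Theorem~2.13]{CF} for part~(b) — that result is precisely the inertially-arbitrary analogue of the nilpotent-Jacobian method, stating that a realization with $\ri{A}=(0,0,c_1,c_2)$, $c_1\ge 2$, and a full-rank Jacobian forces every superpattern of $\A$ to be inertially arbitrary; (iii) observe that a superpattern $\B$ of $\A$ contains the same realization $A$ (or a perturbation within $Q(\B)$), so the Jacobian argument transfers, giving the superpattern conclusion. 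Since the theorem is explicitly stated to be a combination of these two cited results, the proof is essentially a one-line reduction.

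The main obstacle — more a matter of exposition than mathematics — is making sure the reader sees that the role played by $p_A(z)=z^{n}$ in Theorem~\ref{SAP} is played here by the weaker requirement that the realization have $c_1\ge 2$ zero eigenvalues: nilpotency is not needed, only a double root at the origin together with the full-rank Jacobian, because the Jacobian lets one push the spectrum in every direction in coefficient space and the double zero supplies the needed flexibility in the two "undetermined" real parts. I do not expect to need any genuinely new argument; the proof simply quotes \cite[Theorem~2.13]{CF} and \cite[Corollary~4.5]{CGKOVV}.

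\begin{proof}
Suppose $A\in Q(\A)$ with $\ri{A}=(0,0,c_1,c_2)$, $c_1\ge 2$, allows a full-rank Jacobian. Part~(a) is \cite[Corollary~4.5]{CGKOVV}, since that result deduces that $A$ is nonderogatory from the full-rank Jacobian alone. Part~(b) is \cite[Theorem~2.13]{CF}, which is the inertially arbitrary analogue of the nilpotent-Jacobian method: the full-rank Jacobian provides, via the implicit function theorem, a local homeomorphism from the variable entries of $X_A$ onto an open neighbourhood of the coefficient vector of $p_A(z)$ in $\R^n$, and because $z^2$ divides $p_A(z)$ the double zero can be perturbed to any prescribed pair of real parts, while the remaining coefficients are controlled independently; hence every inertia $(a,b,c)$ with $a+b+c=n$ is realized, and the same perturbation lies in $Q(\B)$ for every superpattern $\B$ of $\A$.
\end{proof}
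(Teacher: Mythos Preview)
Your proposal is correct and takes essentially the same approach as the paper: the paper gives no independent proof of Theorem~\ref{NJ-IAP} but simply states that it combines \cite[Theorem~2.13]{CF} (for part~(b)) and \cite[Corollary~4.5]{CGKOVV} (for part~(a)), which is exactly what you do. The additional intuition you supply about the implicit function theorem and the role of the double zero eigenvalue is accurate and goes slightly beyond the paper's bare citation, but the substance is the same.
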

Note that, unlike the context of Theorem~\ref{SAP}, $A$ is not
necessarily irreducible if $A$ allows a full-rank Jacobian in Theorem~\ref{NJ-IAP}.

\begin{example}\label{Ione}{\rm
 Let  
$$A=\left[\begin{array}{rrrr}
1&-1&0&0\\
1&-1&0&0\\
0&0&1&-2\\
0&0&1&-1\\
\end{array}
\right] \qquad {\rm and \qquad}
 X_A=\left[\begin{array}{rrrr}
1&x_1&0&0\\
1&x_2&0&0\\
0&0&1&x_3\\
0&0&1&x_4\\
\end{array}
\right]. 
 $$
This matrix $A \in Q(\T_2\oplus\T_2)$ is a nonderogatory  reducible matrix with refined inertia $(0,0,2,2)$ 
and $J_{X_A=A}$ has rank $4$. Therefore, by Theorem~\ref{NJ-IAP} every superpattern of
$$\A=\left[\begin{array}{rrrr}
+&-&0&0\\
+&-&0&0\\
0&0&+&-\\
0&0&+&-\\
\end{array}\right]$$
is inertially arbitrary. Note that while it is known  \cite{C} that
$\T_2 \oplus \T_2$ is spectrally arbitrary (and hence inertially arbitrary), it is not
yet known if every superpattern of $\T_2 \oplus \T_2$ is spectrally arbitrary.
}\end{example}

The proof of the next theorem is the same as that for Theorem~\ref{borderT}
except it uses Theorem~\ref{NJ-IAP} instead of Theorem~\ref{SAP}.

\begin{theorem}\label{borderIT}
Let $\A$ be a sign pattern. Suppose $A=[a_{ij}]\in Q(\A)$ is a matrix 
having refined inertia $(0,0,c_1,c_2)$  
with $c_1\geq 2$, and $A$ allows a full-rank Jacobian. 
Suppose $a_{kk}\neq0$ and $a_{kv}\neq 0$ for some $v\neq k$. 
If $\det{A(k,v)}\neq 0$, 
then $B$  in $(\ref{Be})$ has refined inertia $(0,0,c_1+1,c_2)$, 
$B$ allows a full-rank Jacobian and 
 every superpattern of $\cB=\sgn(B)$ is inertially arbitrary. 
\end{theorem}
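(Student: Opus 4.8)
The plan is to reuse the proof of Theorem~\ref{borderT} almost verbatim, changing only the invocation of the spectral result at the very end. So I would begin by observing that the argument of Theorem~\ref{borderT} never used the fact that $A$ is nilpotent beyond two consequences: first, that $p_A(z)=z^n$ (used implicitly via $p_B(z)=zp_A(z)$, which is the statement $B$ is a similarity-scaled extension), and second, that $J_{X_A=A}$ has rank $n$. The key identity $p_{X_B}(z)=zp_{X_A}(z)+\sum_{\ell=1}^{n-1}(-1)^{n+\ell}(y_\ell-x_{nn}x_{n\ell})\det([zI_n-X_A](n,\ell))$ in (\ref{BB}) is purely algebraic and holds for any $A$; likewise the column-equivalence analysis of $N$ and the crucial computation $S_{n+1,\ell}|_{X_B=B}=(-1)^{\ell-1}\det(A(n,\ell))$ at $z=0$ go through unchanged. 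Thus the same hypothesis $a_{kv}\neq 0$, $\det A(k,v)\neq 0$ forces $\mathbf{s}^T\neq\mathbf{0}$ and $J_{X_B=B}$ to have rank $n+1$.

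Second, I would address the refined inertia claim. From (\ref{B}) we have $p_B(z)=zp_A(z)$ whenever $B$ is built from $A$ by the similarity in (\ref{B}); equation (\ref{Be}) is exactly that construction with $\mathbf{x}=a_{kk}\mathbf{e}_k$, $\mathbf{z}=\mathbf{e}_k$, so $p_B(z)=zp_A(z)$ holds here. Hence the eigenvalues of $B$ are those of $A$ together with one additional eigenvalue $0$. Since $A$ has refined inertia $(0,0,c_1,c_2)$, its nonzero-real-part count is $0$ and its nonzero-imaginary count is $c_2$; appending a single zero eigenvalue increases the algebraic multiplicity of $0$ by exactly one, giving $ri(B)=(0,0,c_1+1,c_2)$. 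In particular $c_1+1\geq 3\geq 2$, so the hypothesis of Theorem~\ref{NJ-IAP} is met by $B$.

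Finally, with $ri(B)=(0,0,c_1+1,c_2)$, $c_1+1\geq 2$, and $B$ allowing a full-rank Jacobian, Theorem~\ref{NJ-IAP}(b) immediately yields that every superpattern of $\cB=\sgn(B)$ is inertially arbitrary, completing the proof. I would simply write: ``The proof is identical to that of Theorem~\ref{borderT}, replacing $p_{X_A}(z)=z^n$ with $p_A(z)=zp_A(z)$ (from (\ref{B})) where needed and invoking Theorem~\ref{NJ-IAP} in place of Theorem~\ref{SAP}; the refined inertia of $B$ follows from $p_B(z)=zp_A(z)$.''

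I do not expect a genuine obstacle here, since the only nontrivial content is already packaged in Theorem~\ref{borderT} and Theorem~\ref{NJ-IAP}. The one point that deserves a sentence of care is checking that the algebraic steps leading to (\ref{BB}) and to $S_{n+1,\ell}|_{X_B=B}=(-1)^{\ell-1}\det(A(n,\ell))$ truly do not invoke nilpotency of $A$ — they are cofactor-expansion and multilinearity identities in $z$ and the variable entries, so they are valid for an arbitrary realization $A$, and only the evaluation point (``the nilpotent realization'') is replaced by ``a realization with refined inertia $(0,0,c_1,c_2)$.'' The relation $y_\ell=a_{kk}a_{k\ell}$ at the evaluation point, which kills the matrices $M_\ell$ in (\ref{JacB}), still holds because it is forced by the construction (\ref{Be})/(\ref{XB}), not by any spectral property of $A$.
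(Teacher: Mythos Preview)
Your proposal is correct and follows essentially the same approach as the paper, which simply states that the proof is the same as that for Theorem~\ref{borderT} except that Theorem~\ref{NJ-IAP} is invoked in place of Theorem~\ref{SAP}. Your additional care in verifying that the algebraic identities (\ref{BB}) and the vanishing of the $M_\ell$ coefficients at the realization do not depend on nilpotency, together with your explicit derivation of $\ri{B}=(0,0,c_1+1,c_2)$ from $p_B(z)=zp_A(z)$, fills in details the paper leaves implicit.
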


\begin{example}{\rm
Let $A$ be the matrix in Example~\ref{Ione}.
 With $k=2$ and $v=1$, Theorem~\ref{borderIT}
implies that every superpattern of 
$$\B=\left[\begin{array}{rrrrr}
+&-&0&0&0\\
+&0&0&0&+\\
0&0&+&-&0\\
0&0&+&-&0\\
-&0&0&0&-\\
\end{array}\right]$$
is inertially arbitrary.  Note that $\B$ is spectrally arbitrary since
$\B$ is equivalent to $\T_2 \oplus \V_3$, but it is not known 
if every superpattern of $\B$ is spectrally arbitrary.
}\end{example}

The proof of the next theorem is the same as that for Theorem~\ref{borderT2}
except it uses Theorem~\ref{NJ-IAP} instead of Theorem~\ref{SAP}.

\begin{theorem}\label{borderIT2}
Let $\A$ is a sign pattern. Suppose $A=[a_{ij}]\in Q(\A)$ is a matrix with refined inertia $(0,0,c_1,c_2)$ 
for some $c_1\geq 2$ and $A$ allows a full-rank Jacobian.
Suppose $a_{j\elb }$, $j\neq \elb$, is non-Jacobian for some choice of $X_A$. 
If $a_{\elb v}\neq0$ 
and $\det{A(j,v)}\neq 0$, for some $v$, 
then $B$  in $(\ref{B2})$  has refined inertia $(0,0,c_1+1,c_2)$,
$B$ allows a full-rank Jacobian, and
every superpattern of $\cB=\sgn(B)$ is inertially arbitrary. 
\end{theorem}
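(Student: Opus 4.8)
The plan is to follow exactly the proof of Theorem~\ref{borderT2}, observing that almost nothing in that argument used nilpotency of $A$ per se: the algebraic identity culminating in equation~(\ref{pB}) expresses $p_{X_B}(z)$ in terms of $zp_{X_A}(z)$ plus correction terms whose coefficients are the quantities $y_\kk-bx_{n\kk}$ and $y_\kk x_{ni}-y_ix_{n\kk}$, and these all vanish at the given realization (where $y_i=ba_{ni}=bx_{ni}$) regardless of the spectrum of $A$. The only places the hypothesis ``$A$ nilpotent'' entered Theorem~\ref{borderT2} were (i) to conclude $p_B(z)=zp_A(z)$ and hence that $B$ is nilpotent, and (ii) to invoke Theorem~\ref{SAP}. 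Here I would replace (i) with the observation that $B$ in~(\ref{B2}) is obtained from $A$ by the similarity transformation~(\ref{B}) padded with a zero eigenvalue, so $p_B(z)=zp_A(z)$; since $\ri{A}=(0,0,c_1,c_2)$, this gives $\ri{B}=(0,0,c_1+1,c_2)$, and in particular $c_1+1\geq 3\geq 2$. For (ii) I would invoke Theorem~\ref{NJ-IAP} in place of Theorem~\ref{SAP}.

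Concretely, the steps are: first, record the identity $p_B(z)=zp_A(z)$ from~(\ref{B}), giving the claimed refined inertia of $B$. Second, set up $X_B$ exactly as in~(\ref{XBQ}) with the same $\mathbf{y}$, so that $X_B$ has the zero/nonzero pattern of $\cB$; this is legitimate because $a_{j\elb}$ is non-Jacobian in the chosen $X_A$, so no variable is placed in position $(j,\elb)$ of $X_B$ either. Third, carry over verbatim the computation~(\ref{H1})--(\ref{pB}) expressing $p_{X_B}(z)$, the resulting decomposition~(\ref{JacB2}) of $J_{X_B}$, and the column-dependence relations among $\overrightarrow{N}_{y_\kk}$ and $\overrightarrow{N}_{x_{n\kk}}$; none of this computation uses the spectrum of $A$, only the combinatorial structure and the relation $y_\kk=bx_{n\kk}$ at the realization. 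Fourth, conclude as before that $J_{X_B=B}$ is equivalent to the block form~(\ref{J2}) with bottom row $\mathbf{r}^T\neq\mathbf{0}$, because the $(n+1)$-entry of $\overrightarrow{N}_{y_v}$ equals $(-1)^v\det A(j,v)\neq 0$ by hypothesis, while $J_{X_A=A}$ has rank $n$; hence $J_{X_B=B}$ has rank $n+1$, i.e. $B$ allows a full-rank Jacobian. Finally, apply Theorem~\ref{NJ-IAP} to $B$, whose refined inertia $(0,0,c_1+1,c_2)$ has first zero-count $\geq 2$, to conclude that every superpattern of $\cB=\sgn(B)$ is inertially arbitrary.

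Since the paper already signals that ``the proof of the next theorem is the same as that for Theorem~\ref{borderT2} except it uses Theorem~\ref{NJ-IAP} instead of Theorem~\ref{SAP},'' I expect the write-up to be a short paragraph rather than a repetition. The only genuine obstacle is the bookkeeping needed to justify $\ri{B}=(0,0,c_1+1,c_2)$: one must check that the added eigenvalue $0$ from the bordering increments exactly the algebraic-multiplicity-of-zero coordinate $c_1$ and leaves $c_2$ (and $a,b$) unchanged, which follows immediately from $p_B(z)=zp_A(z)$ together with the definition of refined inertia. Everything else is a transcription of the earlier proof with the spectral hypothesis weakened from ``$p_A(z)=z^n$'' to ``$A$ has no eigenvalues off the imaginary axis and a zero of algebraic multiplicity at least two,'' which the argument never actually exploited beyond these two citation points.
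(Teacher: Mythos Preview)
Your proposal is correct and matches the paper's approach exactly: the paper simply states that the proof is the same as that of Theorem~\ref{borderT2} except that Theorem~\ref{NJ-IAP} replaces Theorem~\ref{SAP}, and your writeup supplies precisely that transcription together with the refined-inertia bookkeeping $p_B(z)=zp_A(z)\Rightarrow \ri{B}=(0,0,c_1+1,c_2)$.
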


\begin{example}{\rm Consider the matrix 
$$A=
\left[\begin{array}{rrrrr}
-1 &-1&-1&0&0\\
2&1&1&0&0\\
0&0&0&-1&-1\\
0&-1&0&0&-1\\
-1&0&0&0&0
\end{array}\right], {\rm{\quad and \quad}}
X_A=
\left[\begin{array}{rrrrr}
-1 &x_1&-1&0&0\\
2&x_2&x_3&0&0\\
0&0&0&-1&x_4\\
0&x_5&0&0&-1\\
-1&0&0&0&0
\end{array}\right].
$$
Matrix $A$ has sign pattern $\G_5$ from \cite{KOV} (see also Section 5.3 of \cite{CGKOVV}), $A$ has refined inertia $(0,0,3,2)$, 
$J_{X=A}$ has full rank and entry $(1,3)$ is non-Jacobian. Thus
with $j=1$, $k=3$, $v=4$, and $b=-1$ in Theorem~\ref{borderIT2}, we obtain the 
inertially arbitrary pattern
$$\B=\left[\begin{array}{rrrrrr}
- &-&0&0&0&+\\
+&+&+&0&0&0\\
0&0&0&-&-&0\\
0&-&0&0&-&0\\
-&0&0&0&0&0\\
0&0&0&+&+&0
\end{array}\right].$$
Since $\G_{5}$ has no nilpotent realization, it follows that $\B$ has no nilpotent
realization; thus $\B$ is not spectrally arbitrary.  
Note that, for $n\geq 2$, using the sign pattern $\G_{2n+1}$ with matrix $\tilde{A}_{2n+1}$ as 
listed in Section 5.3 of \cite{CGKOVV}, then $\tilde{A}_{2n+1}$ has
refined inertia $(0,0,2n-1,2)$, $\det(\tilde{A}(1,4))\neq 0$, and entry $(1,3)$ is non-Jacobian. Thus, using Theorem~\ref{borderIT2} with $j=1$, $k=3$, $v=4$, and $b=-1$ applied to $\tilde{A}_{2n+1},$ we
can construct an even order inertially arbitrary sign pattern with no nilpotent realization
for each even order $2n+2\geq 6$. In \cite{KOV}, only odd order sign patterns were provided with these conditions.}
\end{example}

\textbf{Acknowledgements.}
This research was initiated when the third author visited the University of 
Victoria with support from the Pacific Institute for Mathematical Sciences (PIMS).
The research is partially supported by the authors' NSERC Discovery Grants. 
The authors thank an anonymous referee 
for comments that helped to clarify parts of this paper. 

 \bigskip
 \begin{center}
\textbf{References}
\end{center}

\end{document}